\documentclass[12pt,twoside]{amsart}
\usepackage{amsfonts}
\usepackage{amsmath}
\usepackage{amssymb}
\usepackage{graphicx}
\usepackage{mathrsfs}
\usepackage{tikz}
\usepackage{lscape}

%\usepackage{cancel}
%\usepackage{showkeys}

% THEOREMS ---------------------------------------------------------------
%\theoremstyle{plain}
\newtheorem{theo}{Theorem}

\newtheorem{lem}{Lemma}

\theoremstyle{definition}

\theoremstyle{remark}
\newtheorem{rem}{\bf Remark\/}

\numberwithin{equation}{section}
%\renewcommand{\theequation}{\thesection.\arabic{equation}}

%Notation math\'ematique--------------------------------

\def\1{{\mathchoice {\rm 1\mskip-4mu l} {\rm 1\mskip-4mu l}{\rm 1\mskip-4.5mu l} {\rm 1\mskip-5mu l}}}
\newcommand{\ds}{\displaystyle}
%\newcommand{\w}{\wedge}

%Mise en page ----------------------------------------
\usepackage[width=16.00cm, height=21.00cm, left=2.00cm, right=2.00cm, top=4cm, bottom=2cm]{geometry}

\title{Spectral properties of the Cauchy transform on modified Bergman spaces}

\author[K. Chbichib, N. Ghiloufi and S. Snoun]{Khaled Chbichib, Noureddine Ghiloufi and Safa Snoun}
\email{c.khaled@yahoo.fr, noureddine.ghiloufi@fsg.rnu.tn, snoun.safa@fsg.rnu.tn}
\address{University of Gabes\\ Faculty of Sciences of Gabes\\  Laboratory of Mathematics and Applications (LR17ES11)\\ 6072, Gabes, Tunisia.}

\subjclass[2010]{47G10, 47A75, 30H20}
\keywords{Modified Bergman spaces, Reproducing Kernels,  Cauchy operator, eigenvalues.}
\begin{document}

\begin{abstract}
    In this paper, we determine the singular values $s_n(T_{\alpha,\beta})$ and $s_n(R_{\alpha,\beta})$ of the operators $T_{\alpha,\beta}=\mathcal C\mathbb P_{\alpha,\beta}$ and $R_{\alpha,\beta}=\mathbb P_{\alpha,\beta}\mathcal C\mathbb P_{\alpha,\beta}$ where $\mathcal C$ is the integral Cauchy transform and $\mathbb P_{\alpha,\beta}$ is the orthogonal projection from $L^2(\mathbb D,\mu_{\alpha,\beta})$ onto the modified Bergman space $\mathcal A^2(\mathbb D,\mu_{\alpha,\beta})$. These singular values will be expressed in terms of some series involving hypergeometric functions. We show that in both cases the sequence $n^{\alpha+1}s_n(.)$ has a finite limit as $n\to+\infty$.
\end{abstract}
\maketitle
%\tableofcontents
\section{Introduction}
Let $\mathbb D$ be the unit disc of $\mathbb C$ and let $\mu_{\alpha,\beta}$ be the probability measure on $\mathbb D$ given by $$d\mu_{\alpha,\beta}(z):=\frac{|z|^{2\beta}(1-|z|^2)^\alpha }{\mathscr B(\alpha+1,\beta+1)}dA(z)$$  where $\alpha,\beta$ are two fixed real parameters such that $\alpha,\beta>-1$. Here $\mathscr B(.,.)$ is the standard beta function and $dA(z)=\frac{1}{\pi}dxdy,\ z=x+iy$ is the normalized area measure on $\mathbb D$.\\

By $\mathcal A^2(\mathbb D,\mu_{\alpha,\beta})$ we denote the space of holomorphic functions $f$ on $\mathbb D^*:=\mathbb D\smallsetminus\{0\}$ with finite norm  $\|f\|_2=\langle f,f\rangle_{\alpha,\beta}^{\frac{1}{2}}$ where $\langle .,.\rangle_{\alpha,\beta}$ is the inner product on $L^2(\mathbb D,\mu_{\alpha,\beta})$ defined as follows:
$$\langle f,g\rangle_{\alpha,\beta}:=\int_{\mathbb D}f(z)\overline{g(z)}d\mu_{\alpha,\beta}(z),\quad \forall\; f,g\in L^2(\mathbb D,\mu_{\alpha,\beta}).$$
It is well known (see for example \cite{AG-Sn, Gh-Sn, Gh-Za}) that $\mathcal A^2(\mathbb D,\mu_{\alpha,\beta})$ is a Hilbert space and we can restrict the study to the case  $-1<\beta\leq 0$ that will be assumed throughout this paper. In this case, the reproducing kernel of $\mathcal A^2(\mathbb D,\mu_{\alpha,\beta})$ is $\mathbb K_{\alpha,\beta}(z,w)=\mathcal K_{\alpha,\beta}(z\overline{w})$ where
\begin{equation}\label{q1}
\mathcal K_{\alpha,\beta}(\xi)=\ds\ _2F_1\left(\left.
\begin{array}{cc}
1,&\alpha+\beta+2\\
& \beta+1
\end{array}\right|\xi\right).
\end{equation}
Here $\ _2F_1$ is the Gauss hypergeometric function. Indeed, many results will be expressed in terms of the hypergeometric functions:
    $$_kF_j\left(\left.
\begin{array}{c}
a_1,\dots,a_k\\
b_1,\dots,b_j
\end{array}\right|\xi\right)=\sum_{n=0}^{+\infty}\frac{(a_1)_n\dots(a_k)_n}{(b_1)_n\dots(b_j)_n}\frac{\xi^n}{n!}$$
where $(a)_n=a(a+1)\dots(a+n-1)$ is the Pochhammer symbol for every $a\in\mathbb R$. In fact  $(a)_n=\Gamma(a+n)/\Gamma(a)$  for every $a\not\in\mathbb Z^-$. (see \cite{Ma-Ob-So} for more details about hypergeometric functions).\\

It follows that the orthogonal projection $\mathbb P_{\alpha,\beta}$ from  $L^2(\mathbb D,\mu_{\alpha,\beta})$ onto $\mathcal A^2(\mathbb D,\mu_{\alpha,\beta})$ is given by the following integral transform:
$$\mathbb P_{\alpha,\beta}f(w)=\int_{\mathbb D}f(\xi)\mathcal K_{\alpha,\beta}(w\overline{\xi})d\mu_{\alpha,\beta}(\xi).$$
These modified Bergman spaces are interesting. Indeed, it is claimed in \cite{Dk-Gh-Sn} that the space $\mathcal A^2(\mathbb D,\mu_{\alpha,\beta})$ is a modification of the classical Bergman space $\mathcal A^2(\mathbb D,\mu_{\alpha})$ resulting from some physical problems. In particular, its associated Landau operator can be viewed as a magnetic Schr\"odinger operator with a \textbf{singular  potential} $\theta_{\alpha,\beta}$ given by $$\theta_{\alpha,\beta}(z)=\frac{i\alpha}{1-|z|^2}(\overline{z}dz-zd\overline{z})-i\beta\left(\frac{dz}{z}-\frac{d\overline{z}}{\overline{z}}\right).$$
We consider the two operators $T_{\alpha,\beta}=\mathcal C\mathbb  P_{\alpha,\beta}$ and $R_{\alpha,\beta}=\mathbb P_{\alpha,\beta}\mathcal C\mathbb P_{\alpha,\beta}$ where $\mathcal C$  is the Cauchy transform acting on $L^2(\mathbb D,\mu_{\alpha,\beta})$ as follows:
$$\mathcal Cf(z)=-\int_{\mathbb D}\frac{f(w)}{w-z}d\mu_{\alpha,\beta}(w).$$

The paper aims to find suitable conditions on $\alpha$ and $\beta$ for which the Cauchy transform $\mathcal C$ is a compact operator on $L^2(\mathbb D,\mu_{\alpha,\beta})$ and to find the singular values $s_n(T_{\alpha,\beta})$ and $s_n(R_{\alpha,\beta})$ of the compact operators $T_{\alpha,\beta}$ and  $R_{\alpha,\beta}$ respectively. Namely, for a compact operator $T$, the singular values $(s_n(T))_n$ is the sequence of eigenvalues of the operator $|T|=(T^*T)^{\frac{1}{2}}$.\\

Historically, the Cauchy transform $\mathcal C$ on $L^2(\Omega)$ was the aim of several papers. For a bounded domain $\Omega$ of $\mathbb C$, in 1989, Anderson and Hinkkanen proved in \cite{AH} the continuity of $\mathcal C$ and they gave an estimation of its norm for $\Omega=\mathbb D$. Indeed they showed that in this case $\|\mathcal C\|= \frac{2}{j_0}$ where $j_0$ is the smallest positive zero of the Bessel function $J_0$. One year later, this result was reproved in \cite{AKL} by Anderson, Khavinson and Lomonosov where they proved that $\mathcal C$ is compact. Between 1998 and 2000, Dostanic (see \cite{Do1, Do2}) gave some spectral properties of $\mathcal C$ on some weighted Bergman spaces. Namely, in each case, he gave the asymptotic behavior of the singular values of $\mathcal C$ and those of its product with the Bergman projection. In 2005, he studied in \cite{Do3} the norm of $\mathcal C$ on $L^2(\mathbb D,\mu)$ where $d\mu(z)=\omega(|z|)d A(z)$ for some radial weight $\omega$. Five years later, in 2010, he returned to the unweighted case on $\mathbb D$ and he gave in \cite{Do4} an exact asymptotic behavior of the singular values of $\mathcal C P_h$ where $P_h$ is the Bergman projection onto the harmonic Bergman space.\\

In our statement, we extend the Dostanic study by giving explicitly the singular values $s_n(T_{\alpha,\beta})$ and $s_n(R_{\alpha,\beta})$ of $T_{\alpha,\beta}$ and  $R_{\alpha,\beta}$ respectively and then we find their asymptotic behavior when $n$ goes to infinity.

To simplify the results, we need the function $h$ (the incomplete beta function) given by $$h(r)=\int_0^rt^\beta(1-t)^\alpha dt,\quad \forall\; r\in[0,1]$$
and the sequences $(\mathcal I_n)_{n\geq0}$ and $(\mathcal J_n)_{n\geq0}$ where
$$\mathcal I_n:=\int_0^1\frac{h(r)}{h(1)}r^{\beta+n-1}(1-r)^\alpha dr\quad\text{and}\quad \mathcal J_n:=\int_0^1\left(\frac{h(r)}{h(1)}\right)^2r^{\beta+n-1}(1-r)^\alpha dr.
$$
It is interesting to claim that if we apply Formula (18) in  \cite{CF}, we obtain
$$\mathcal I_n=\ds \frac{\mathscr B(\alpha+1,2\beta+n+1)}{(\beta+1)\mathscr B(\alpha+1,\beta+1)} \ _3F_2\left(\left.
\begin{array}{c}
-\alpha,\beta+1,2\beta+n+1\\
 \beta+2,\alpha+2\beta+n+2
\end{array}\right|1\right).
$$
While to obtain  $\mathcal J_n$, we may apply  Formula (17) in \cite{CF} but this formula contains a mistake, so to correct it we may return to Formula (16) in the same reference \cite{CF} and we continue the competition to obtain
$$\begin{array}{lcl}
\mathcal J_n&=&\ds \frac{1}{(\beta+1)\mathscr B^2(\alpha+1,\beta+1)}\sum_{k=0}^{+\infty}\frac{(-\alpha)_k}{(\beta+1+k)k!}\frac{\Gamma(3\beta+\alpha+n+k+3)}{\Gamma(3\beta+n+k+2)}\times \\
&&\ds\hfill\ _3F_2\left(\left.
\begin{array}{c}
-\alpha,\beta+1,3\beta+n+k+2\\
 \beta+2,3\beta+\alpha+n+k+3
\end{array}\right|1\right).
\end{array}$$

Now we can cite the main result of the paper:
\begin{theo}\label{th1}
For every $-\frac{1}{2}<\beta\leq 0$ and $\alpha>-1$, the two operators $T_{\alpha,\beta}$ and $R_{\alpha,\beta}$ are  well-defined and compact from $L^2(\mathbb D,\mu_{\alpha,\beta})$ into itself and their singular values are given by
$$s_n^2(T_{\alpha,\beta})=\left\{\begin{array}{lcl}
\ds\frac{\mathcal J_0}{\mathscr B(\alpha+1,\beta+1)}&if &n=0\\
\ds\frac{\Gamma(\alpha+\beta+2+n)}{\Gamma(\alpha+1)\Gamma(\beta+1+n)}\Bigl(\mathscr B(\alpha+1,\beta+n)-2\mathcal I_n+\mathcal J_n\Bigr)&if& n\geq1
\end{array}\right.
$$
and
$$s_n(R_{\alpha,\beta})= \frac{1}{\Gamma(\alpha+1)}\sqrt{\frac{\alpha+\beta+2+n}{\beta+1+n}}\frac{\Gamma(\alpha+\beta+2+n)}{\Gamma(\beta+1+n)}(\mathscr B(\alpha+1,\beta+n+1)-\mathcal I_{n+1}).$$
Moreover, we have
\begin{align*}
    s_n^2(T_{\alpha,\beta})&\underset{n\to+\infty}{\sim} \frac{1}{\mathscr B^2(\alpha+1,\beta+1)}\frac{\Gamma(3\alpha+3)}{(\alpha+1)^2\Gamma(\alpha+1)}\frac{1}{n^{2\alpha+2}}\\
    s_n(R_{\alpha,\beta})&\underset{n\to+\infty}{\sim} \frac{1}{\mathscr B(\alpha+1,\beta+1)}\frac{\Gamma(2\alpha+2)}{\Gamma(\alpha+2)}\frac{1}{n^{\alpha+1}}.
\end{align*}
\end{theo}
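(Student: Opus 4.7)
My plan is to exploit the radial symmetry of $\mu_{\alpha,\beta}$. The monomials $\phi_n(z)=z^n/\|z^n\|_2$ form an orthonormal basis of $\mathcal A^2(\mathbb D,\mu_{\alpha,\beta})$, with $\|z^n\|_2^2=\mathscr B(\alpha+1,\beta+n+1)/\mathscr B(\alpha+1,\beta+1)$. I would first compute $\mathcal C(z^n)$ by splitting $\frac{w^n}{z-w}=-\sum_{j=0}^{n-1}z^jw^{n-1-j}+\frac{z^n}{z-w}$, evaluating the angular integral ($\int_0^{2\pi}w^k\,d\phi$ vanishes unless $k=0$, and $\int_0^{2\pi}(z-w)^{-1}\,d\phi$ equals $2\pi/z$ for $\rho<|z|$ and $0$ for $\rho>|z|$), and then substituting $t=\rho^2$:
\[
\mathcal C(1)(z)=\frac{h(|z|^2)}{z\,\mathscr B(\alpha+1,\beta+1)},\qquad \mathcal C(z^n)(z)=-\frac{z^{n-1}(h(1)-h(|z|^2))}{\mathscr B(\alpha+1,\beta+1)}\ (n\geq 1).
\]
Each $\mathcal C(z^n)$ is pure in the angular frequency $n-1$, so $(\mathcal Cz^n)_{n\geq 0}$ is $L^2$-orthogonal by Fubini, and $T_{\alpha,\beta}^*T_{\alpha,\beta}=\mathbb P_{\alpha,\beta}\mathcal C^*\mathcal C\mathbb P_{\alpha,\beta}$ is diagonal in $(\phi_n)$ with eigenvalues $s_n^2(T_{\alpha,\beta})=\|\mathcal Cz^n\|_{L^2}^2/\|z^n\|_2^2$.

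Expanding $(h(1)-h(t))^2=h(1)^2-2h(1)h(t)+h(t)^2$ in $\|\mathcal Cz^n\|_{L^2}^2$ (after $t=\rho^2$) produces exactly the three integrals $h(1)^2\mathscr B(\alpha+1,\beta+n)$, $h(1)^2\mathcal I_n$, $h(1)^2\mathcal J_n$; since $h(1)=\mathscr B(\alpha+1,\beta+1)$, the formula for $s_n^2(T_{\alpha,\beta})$ at $n\geq 1$ follows, and the case $n=0$ reduces separately to $\|\mathcal C(1)\|_{L^2}^2=\mathcal J_0/\mathscr B(\alpha+1,\beta+1)$. The same frequency-shift argument shows that $B:=\mathbb P_{\alpha,\beta}\mathcal C|_{\mathcal A^2}$ is a weighted backward shift: $B\phi_0=0$ and $B\phi_n=-b_n\phi_{n-1}$ for $n\geq 1$, where projecting $\mathcal C(z^n)$ onto $z^{n-1}$ gives $b_n=\sqrt{(\alpha+\beta+n+1)/(\beta+n)}\cdot[\mathscr B(\alpha+1,\beta+n)-\mathcal I_n]/\mathscr B(\alpha+1,\beta+n)$; hence $B^*B\phi_n=|b_n|^2\phi_n$, and the indexing $s_n(R_{\alpha,\beta})=|b_{n+1}|$ together with $\Gamma$-function identities yields the announced formula.

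The asymptotics rest on the telescoped identities
\[
\mathscr B(\alpha+1,n+\beta)-2\mathcal I_n+\mathcal J_n=\int_0^1\Bigl(1-\tfrac{h(r)}{h(1)}\Bigr)^{\!2} r^{n+\beta-1}(1-r)^\alpha\,dr,
\]
\[
\mathscr B(\alpha+1,\beta+n+1)-\mathcal I_{n+1}=\int_0^1\Bigl(1-\tfrac{h(r)}{h(1)}\Bigr) r^{n+\beta}(1-r)^\alpha\,dr.
\]
Since $1-h(r)/h(1)=h(1)^{-1}\int_r^1 t^\beta(1-t)^\alpha\,dt\sim (1-r)^{\alpha+1}/[(\alpha+1)h(1)]$ as $r\to 1$, Laplace's method (with the change of variable $r=1-s/n$) gives the equivalents $\Gamma(3\alpha+3)/[(\alpha+1)^2h(1)^2 n^{3\alpha+3}]$ and $\Gamma(2\alpha+2)/[(\alpha+1)h(1)\,n^{2\alpha+2}]$ respectively. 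Multiplying by $\Gamma(\alpha+\beta+n+2)/[\Gamma(\alpha+1)\Gamma(\beta+n+1)]\sim n^{\alpha+1}/\Gamma(\alpha+1)$ and invoking $(\alpha+1)\Gamma(\alpha+1)=\Gamma(\alpha+2)$ delivers both stated equivalents; in particular $s_n\to 0$, giving compactness. The main obstacle I anticipate is making the Laplace estimate precise enough that the correct leading constants survive the limit---this requires uniformly controlling the remainder $1-h(r)/h(1)-(1-r)^{\alpha+1}/[(\alpha+1)h(1)]$ on $[0,1]$---together with verifying that the eigenvalues read off the diagonal form are arranged in the decreasing order required by the definition of singular values.
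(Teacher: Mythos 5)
Your proposal is correct and reaches all the stated formulas, but it replaces the paper's two main technical devices by different (and arguably more elementary) ones, so a comparison is worthwhile. For the singular values, the paper does not compute $\mathcal C z^n$ directly: it computes the full kernel $\tau_{\alpha,\beta}(z,\xi)=-\int_{\mathbb D}\frac{\mathcal K_{\alpha,\beta}(w\overline\xi)}{w-z}\,d\mu_{\alpha,\beta}(w)$ of $T_{\alpha,\beta}$ by applying Stokes' formula to $g_{z,\xi}(w)=-\frac{\mathcal K_{\alpha,\beta}(w\overline\xi)}{w(w-z)}\frac{h(|w|^2)}{h(1)}$ on $\mathbb D(0,\eta)\smallsetminus(\mathbb D(0,\varepsilon)\cup\mathbb D(z,\tau))$ and letting $\varepsilon,\tau\to0$, $\eta\to1$; expanding the resulting kernel $\frac{1}{z}+\bigl(\frac{h(|z|^2)}{h(1)}-1\bigr)\frac{\mathcal K_{\alpha,\beta}(z\overline\xi)}{z}$ in the bases $(e_n)$ and $(\psi_n)$ then yields exactly your formulas $\mathcal C e_n\propto\psi_n$. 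Your partial-fraction-plus-angular-mean-value computation of $\mathcal C(z^n)$ gives the same functions with less machinery, and your identification of $T^*_{\alpha,\beta}T_{\alpha,\beta}$ as diagonal and of $\mathbb P_{\alpha,\beta}\mathcal C|_{\mathcal A^2}$ as a weighted backward shift coincides with the paper's conclusion. For the asymptotics, the paper first integrates by parts to rewrite the relevant integrals as $\frac{n}{3}\mathcal M_{u_3}(n)$ and $\frac{n}{2}\mathcal M_{u_2}(n)$ with $u_p=(h(1)-h)^p$, and then carries out precisely the Laplace-type splitting you describe (cutting at an $r_0$ where $h(1)-h(r)$ is within $\varepsilon$ of $(1-r)^{\alpha+1}/(\alpha+1)$, bounding the piece on $[0,r_0]$ by $O(r_0^\eta)$); your direct estimate of $\int_0^1(1-h(r)/h(1))^p r^{n+\beta-1}(1-r)^\alpha dr$ against $\mathscr B(p(\alpha+1)+1,p\beta+n)$ is the same argument without the preliminary integration by parts, and your constants check out.

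One point you should not leave implicit: the theorem asserts that $T_{\alpha,\beta}$ and $R_{\alpha,\beta}$ are \emph{well-defined} on all of $L^2(\mathbb D,\mu_{\alpha,\beta})$, and your diagonalization only computes $\mathcal C$ on the monomials. To pass from $\mathcal C\phi_n$ to $\mathcal C\mathbb P_{\alpha,\beta}f$ for a general $f\in L^2$ you must interchange $\mathcal C$ with the infinite expansion $\mathbb P_{\alpha,\beta}f=\sum_n\langle f,\phi_n\rangle\phi_n$, which requires knowing beforehand that $\mathcal C$ is a bounded operator on $L^2(\mathbb D,\mu_{\alpha,\beta})$ (or at least on $\mathcal A^2$). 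The paper devotes its Lemma 1 to this: for $-\frac12<\beta\le0$ the kernel $\varphi(z)=1/z$ lies in $L^1(\mu_{\alpha,\beta})$ and a Young-type convolution estimate gives $\|\mathcal Cf\|_2\le\|\varphi\|_1\|f\|_2$. Adding such a boundedness statement (the hypothesis $\beta>-\frac12$ is exactly what makes it work) would close the only genuine gap in your write-up; the remaining caveats you raise yourself (uniformity in the Laplace estimate, monotone ordering of the singular values) are handled, or equally glossed over, in the paper.
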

To prove the above theorem, we will proceed in steps. As a first step, we discuss the Cauchy transform's continuity and compactness to conclude that the operators $T_{\alpha,\beta}$ and $R_{\alpha,\beta}$ are compact. We determine in the second step the singular values $s_n(T_{\alpha,\beta})$ and $s_n(R_{\alpha,\beta})$ of $T_{\alpha,\beta}$ and $R_{\alpha,\beta}$ respectively. To achieve the proof, we find at the end of the paper the asymptotic behavior of these two singular values when $n$ goes to infinity.
\section{Compactness of $\mathcal C$}

\begin{lem}\label{l1}
    For every $-\frac{1}{2}<\beta\leq0$, the Cauchy transform $\mathcal C$ is continuous from $L^2(\mathbb D,\mu_{\alpha,\beta})$ into itself. Moreover, if $\alpha>-\frac{2\beta+1}{3\beta+2}$ then $\mathcal C$ is compact.
\end{lem}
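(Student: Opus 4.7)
The plan is to treat continuity and compactness separately. For continuity of $\mathcal C$ on $L^2(\mathbb D,\mu_{\alpha,\beta})$ I would apply Schur's test to the positive kernel $|w-z|^{-1}$ of $|\mathcal C|$, with a test function of the form $\varphi(z)=|z|^a(1-|z|^2)^b$. The two symmetric Schur integrals reduce in polar coordinates to classical weighted disc integrals of the form $\int_{\mathbb D}|w|^c(1-|w|^2)^d|w-z|^{-1}dA(w)$, and with $a,b$ chosen suitably these are dominated by $\varphi(z)$ (respectively $\varphi(w)$) uniformly in the free variable. The exponent $\beta$ controls integrability at $w=0$ (where the kernel combines with the factor $|w|^{2\beta}$ of the measure), and it is this obstruction that forces the hypothesis $\beta>-\tfrac12$.

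For compactness I would exploit the rotational symmetry of $\mu_{\alpha,\beta}$: expanding $f\in L^2(\mathbb D,\mu_{\alpha,\beta})$ in Fourier modes $f(\rho e^{i\theta})=\sum_m f_m(\rho)e^{im\theta}$ and inserting the two Laurent expansions of $(w-z)^{-1}$ (one for $|w|>|z|$, one for $|w|<|z|$), a direct angular integration yields
\[
\mathcal C\bigl(f_m(\rho)e^{im\theta}\bigr)(re^{i\phi})=(T_m f_m)(r)\,e^{i(m-1)\phi},
\]
where $T_m$ is an explicit one-dimensional integral operator on $L^2([0,1],d\nu)$ with $d\nu=\tfrac{2\rho^{2\beta+1}(1-\rho^2)^\alpha}{\mathscr B(\alpha+1,\beta+1)}d\rho$, whose kernel against $d\nu$ is $\mp\, r^{m-1}\rho^{-m}$ on $\{\rho\gtrless r\}$ (sign and integration side dictated by the sign of $m$). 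Thus $\mathcal C$ decomposes as a block-subdiagonal direct sum $\bigoplus_m T_m$ with respect to the angular-mode decomposition of $L^2(\mathbb D,\mu_{\alpha,\beta})$, and its compactness is equivalent to each $T_m$ being compact together with $\|T_m\|\to 0$ as $|m|\to\infty$.

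Each $T_m$ is easily seen to be Hilbert--Schmidt (hence compact), so the remaining task is to estimate $\|T_m\|_{HS}$ sharply as $|m|\to\infty$. After the substitutions $u=r^2$, $v=\rho^2$ followed by $u=vs$ (or $v=us$ when $m\leq 0$), one is reduced to a double integral of the shape
\[
\|T_m\|_{HS}^2 \;\propto\; \int_0^1\!\int_0^1 s^{m+\beta-1}\,v^{2\beta}(1-v)^\alpha(1-vs)^\alpha\,ds\,dv.
\]
This last estimate is the main obstacle: when $\alpha<0$ the factor $(1-vs)^\alpha$ concentrates near $vs=1$, and balancing this boundary singularity against the origin weight $v^{2\beta}$ and the large power $s^{m+\beta-1}$ requires a careful splitting of the domain of integration. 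The precise numerical threshold $\alpha>-\frac{2\beta+1}{3\beta+2}$ should then emerge from closing this estimate and ensuring $\|T_m\|_{HS}\to 0$ as $|m|\to\infty$.
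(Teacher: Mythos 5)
Your route is genuinely different from the paper's. The paper proves continuity by viewing $\mathcal Cf$ as a convolution of $f$ with $\varphi(z)=1/z\in L^1(\mathbb D,\mu_{\alpha,\beta})$ (this is where $\beta>-\tfrac12$ enters) and proves compactness by showing that $\mathcal C^*\mathcal C$ is Hilbert--Schmidt, writing its kernel as $\psi_z*\varphi$ and invoking Young's inequality with exponents $\tfrac1p+\tfrac1q=\tfrac32$; the threshold $\alpha>-\frac{2\beta+1}{3\beta+2}$ is nothing more than the condition under which admissible exponents exist. Your Schur-test plan for continuity is a legitimate alternative (indeed the constant weight already works: the kernel $|w-z|^{-1}$ is symmetric and $\sup_z\int_{\mathbb D}|w-z|^{-1}\,d\mu_{\alpha,\beta}(w)<\infty$ exactly when $\beta>-\tfrac12$ and $\alpha>-1$, so you do not need to tune $a,b$), and your angular-mode decomposition of $\mathcal C$ is correct: $\mathcal C$ maps the $m$-th mode to the $(m-1)$-th with the one-dimensional kernels you describe, and compactness is equivalent to each $T_m$ being compact with $\|T_m\|\to0$. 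Your reduction of $\|T_m\|_{HS}^2$ to the double integral $\int_0^1\!\int_0^1 s^{m+\beta-1}v^{2\beta}(1-v)^\alpha(1-vs)^\alpha\,ds\,dv$ (for $m\ge1$; the analogue with $s^{-m+\beta}$ for $m\le0$) is also correct.

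The proposal is nevertheless incomplete precisely at the step you flag as ``the main obstacle,'' and your expectation about that step is wrong in an instructive way: no careful splitting is needed and the threshold $\alpha>-\frac{2\beta+1}{3\beta+2}$ will \emph{not} emerge from it. For $m\ge1$ bound $s^{m+\beta-1}\le s^{\beta}$ and check that the dominant $s^{\beta}v^{2\beta}(1-v)^\alpha(1-vs)^\alpha$ is integrable on $[0,1]^2$ for every $\alpha>-1$ and $\beta>-\tfrac12$: the factors $s^\beta$ and $v^{2\beta}$ handle the origin, and in the only delicate corner $(v,s)\to(1,1)$ one integrates in $s$ first, $\int_0^1(1-vs)^\alpha\,ds=\frac{1-(1-v)^{\alpha+1}}{v(\alpha+1)}\le C$, leaving $\int(1-v)^\alpha\,dv<\infty$. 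Since $s^{m+\beta-1}\to0$ a.e., dominated convergence gives $\|T_m\|_{HS}\to0$ (similarly for $m\to-\infty$), hence compactness of $\mathcal C$ for \emph{all} $\alpha>-1$ and $-\tfrac12<\beta\le0$. So once you actually close the estimate, your method proves strictly more than the lemma and, for this range of $\beta$, settles the paper's open problem (4)(a); the condition $\alpha>-\frac{2\beta+1}{3\beta+2}$ is an artifact of the paper's Young-inequality bookkeeping, not a property of the operator. As submitted, however, both halves of your argument stop short of verification (the Schur majorization is asserted, the Hilbert--Schmidt decay is deferred), so you should supply these computations before the proof can be considered complete.
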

This result is proved in \cite{AKL} for $\alpha=\beta=0$.
\begin{proof}
    If we set $\varphi(z)=\frac{1}{z}$ then it is easy to see that $\varphi\in L^q(\mathbb D,\mu_{\alpha,\beta})$ for every $q\in]0,2(\beta+1)[$ and
    $$\|\varphi\|_q=\left(\int_{\mathbb D}|\varphi(\xi)|^q d\mu_{\alpha,\beta}(\xi)\right)^{\frac{1}{q}}=\left(\frac{\mathscr B(\alpha+1,\beta+1+\frac{q}{2})}{\mathscr B(\alpha+1,\beta+1)}\right)^{\frac{1}{q}}.$$
    In particular, if $\beta>-\frac{1}{2}$ then we obtain $\varphi\in L^1(\mathbb D,\mu_{\alpha,\beta})$. It follows that for every $f\in L^2(\mathbb D,\mu_{\alpha,\beta})$,   $\mathcal Cf=f*\varphi\in L^2(\mathbb D,\mu_{\alpha,\beta}) $ as a convolution product and $\|\mathcal Cf\|_2\leq\|\varphi\|_1 \|f\|_2$. Hence $\mathcal C$ is a linear continuous operator from $L^2(\mathbb D,\mu_{\alpha,\beta})$ into itself.\\

    To discuss the compactness of  $\mathcal C$ on $L^2(\mathbb D,\mu_{\alpha,\beta})$, we will find a suitable condition on $\alpha$ for which $\mathcal C^*\mathcal C$ is a Hilbert-Schmidt operator. Using Fubini theorem, for every $f,g\in L^2(\mathbb D,\mu_{\alpha,\beta})$, we have
    $$\begin{array}{lcl}
    \langle\mathcal C f,g\rangle_{\alpha,\beta}&=&\ds\int_{\mathbb D}\mathcal Cf(\xi)\overline{g(\xi)}d\mu_{\alpha,\beta}(\xi)\\
    &=&-\ds\int_{\mathbb D}\left(\int_{\mathbb D} \frac{f(w)}{w-\xi}d\mu_{\alpha,\beta}(w)\right) \overline{g(\xi)}d\mu_{\alpha,\beta}(\xi)\\
    &=&\ds -\int_{\mathbb D}f(w)\left(\int_{\mathbb D} \frac{\overline{g(\xi)}}{w-\xi}d\mu_{\alpha,\beta}(\xi)\right) d\mu_{\alpha,\beta}(w).
    \end{array}
    $$
    That gives
    $$\mathcal C^*g(w)=-\int_{\mathbb D} \frac{g(\xi)}{\overline{w}-\overline{\xi}}d\mu_{\alpha,\beta}(\xi).$$
    It follows that
    $$\begin{array}{lcl}
    \ds\mathcal C^*\mathcal Cf(z)&=&\ds-\int_{\mathbb D} \frac{1}{\overline{z}-\overline{\xi}}\mathcal Cf(\xi)d\mu_{\alpha,\beta}(\xi)\\
    &=&\ds\int_{\mathbb D}\frac{1}{\overline{z}-\overline{\xi}}\left(\int_{\mathbb D} \frac{f(w)}{w-\xi}d\mu_{\alpha,\beta}(w)\right)d\mu_{\alpha,\beta}(\xi)\\
    &=&\ds\int_{\mathbb D}f(w)\left(\int_{\mathbb D}    \frac{1}{(\overline{z}-\overline{\xi})(w-\xi)} d\mu_{\alpha,\beta}(\xi)\right)d\mu_{\alpha,\beta}(w)\\
    &=&\ds \int_{\mathbb D}f(w)\kappa(w,z)d\mu_{\alpha,\beta}(w).
    \end{array}$$
    To prove that $\mathcal C^*\mathcal C$ is a Hilbert-Schmidt, it suffices to show that $\kappa\in L^2(\mathbb D\times \mathbb D,\mu_{\alpha,\beta}\otimes \mu_{\alpha,\beta})$. To reach this aim, we take $z\in\mathbb D^*$ and we consider the function $\psi_z(\xi)=\frac{1}{\overline{z}-\overline{\xi}}$. Then one has $\kappa(w,z)=\psi_z*\varphi(w)$. Now, we claim that $\psi_z\in L^p(\mathbb D,\mu_{\alpha,\beta})$ for every $p\in]0,2[$ and $\varphi\in L^q(\mathbb D,\mu_{\alpha,\beta})$ for every $q\in]0,2(\beta+1)[$. We will show that we can choose $p$ and  $q$ such that $\frac{1}{p}+\frac{1}{q}=\frac{3}{2}$. This yields to $p\in]\frac{2(\beta+1)}{3\beta+2},2[$ so that the corresponding $q$ is in $]1,2(\beta+1)[$. Thanks to Young theorem, one can conclude that with this choice $\kappa(.,z)=\psi_z*\varphi\in L^2(\mathbb D,\mu_{\alpha,\beta})$. Thus using the Fubini-Tonelli theorem,
    $$\|\kappa\|^2_{L^2(\mathbb D\times \mathbb D,\mu_{\alpha,\beta}\otimes \mu_{\alpha,\beta})} =\int_{\mathbb D}\left(\int_{\mathbb D}|\kappa(\xi,z)|^2d\mu_{\alpha,\beta}(\xi)\right) d\mu_{\alpha,\beta}(z)\leq \|\varphi\|_q^2\int_{\mathbb D} \|\psi_z\|_p^2d\mu_{\alpha,\beta}(z).
    $$
    To compute the last integral, we distinguish two cases:
    \begin{enumerate}
        \item First case: $-1<\alpha<0$.  Let $k,s>1$ be two conjugate real numbers ($\frac{1}{k}+\frac{1}{s}=1$) such that $\beta s>-1,\alpha s>-1 $. This gives
        $$1<s<\min\left(-\frac{1}{\beta},-\frac{1}{\alpha}\right),\quad k>\max\left(\frac{1}{\beta+1},\frac{1}{\alpha+1}\right).$$
        The two exponents $k$ and $s$ are chosen to apply the H\"older inequality as follows
        \begin{align*}
            \|\psi_z\|_p^p&=\frac{1}{h(1)}\int_{\mathbb D}\frac{1}{|z-\xi|^p}|\xi|^{2\beta}(1-|\xi|^2)^\alpha dA(\xi)\\
            &\leq \frac{1}{h(1)}\left(\int_{\mathbb D}\frac{1}{|z-\xi|^{pk}} dA(\xi)\right)^{\frac{1}{k}}\left(\int_{\mathbb D}|\xi|^{2\beta s}(1-|\xi|^2)^{\alpha s} dA(\xi)\right)^{\frac{1}{s}}\\
            &\leq \frac{\mathscr B(\alpha s+1,\beta s+1)^{\frac{1}{s}}}{\mathscr B(\alpha+1,\beta+1)} \left(\int_{\mathbb D(z,1+|z|)}\frac{1}{|z-\xi|^{pk}} dA(\xi)\right)^{\frac{1}{k}}\\
            &\leq \frac{\mathscr B(\alpha s+1,\beta s+1)^{\frac{1}{s}}}{\mathscr B(\alpha+1,\beta+1)} \left(\frac{(1+|z|)^{2-pk}}{2-pk}\right)^{\frac{1}{k}}
        \end{align*}
        whenever $pk<2$. Hence we obtain $\|\kappa\|^2_{L^2(\mathbb D\times \mathbb D,\mu_{\alpha,\beta}\otimes \mu_{\alpha,\beta})}$ is finite.  It follows that $\mathcal C^*\mathcal C$ is a Hilbert-Schmidt operator from $L^2(\mathbb D,\mu_{\alpha,\beta})$ into itself and so $\mathcal C$ is a compact operator from $L^2(\mathbb D,\mu_{\alpha,\beta})$ into itself if all assumptions on $p$ are satisfied. To conclude the result, it suffices to show that $]\frac{2(\beta+1)}{3\beta+2},2[\cap ]0,2\min(\beta+1,\alpha+1)[\neq \emptyset.$ This is fulfilled if $\alpha>-\frac{2\beta+1}{3\beta+2}$.
        \item Second case: $\alpha\geq0$. Again one can choose two conjugate real numbers $k,s$ such that $\beta s>-1$ to obtain
        \begin{align*}
            \|\psi_z\|_p^p&\leq\frac{1}{h(1)}\int_{\mathbb D}\frac{1}{|z-\xi|^p}|\xi|^{2\beta} dA(\xi)\\
            &\leq \frac{1}{h(1)}\left(\int_{\mathbb D}\frac{1}{|z-\xi|^{pk}} dA(\xi)\right)^{\frac{1}{k}}\left(\int_{\mathbb D}|\xi|^{2\beta s} dA(\xi)\right)^{\frac{1}{s}}\\
            &\leq \frac{1}{(\beta s+1)^{\frac{1}{s}}h(1)} \left(\int_{\mathbb D(z,1+|z|)}\frac{1}{|z-\xi|^{pk}} dA(\xi)\right)^{\frac{1}{k}}\\
            &\leq \frac{1}{(\beta s+1)^{\frac{1}{s}}h(1)} \left(\frac{(1+|z|)^{2-pk}}{2-pk}\right)^{\frac{1}{k}}.
        \end{align*}
        This is possible since $-\frac{1}{2}<\beta\leq0$ and we can proceed as in the previous case to conclude that $\mathcal C$ is compact.
    \end{enumerate}
\end{proof}
\section{Singular values of operators}
\subsection{Singular values of $T_{\alpha,\beta}$}
For instance, if we take $\alpha>-\frac{2\beta+1}{3\beta+2}$ then as an immediate consequence of Lemma \ref{l1} and since $\mathbb P_{\alpha,\beta}$ is continuous and $\mathcal C$ is compact on $L^2(\mathbb D,\mu_{\alpha,\beta})$ then $T_{\alpha,\beta}$ is also compact. The condition $\alpha>-\frac{2\beta+1}{3\beta+2}$ will be omitted and replaced by $\alpha>-1$ after the determination of the sequence of singular values of $T_{\alpha,\beta}$. To reach this aim,  we start by finding the reproducing kernel of $T_{\alpha,\beta}$ and then we evaluate this kernel to decompose $T_{\alpha,\beta}$ using two orthonormal sequences in  $L^2(\mathbb D,\mu_{\alpha,\beta})$.
\begin{lem}\label{l2}
    For every $\alpha>-1$ and $f\in L^2(\mathbb D,\mu_{\alpha,\beta})$,
    $$T_{\alpha,\beta}f(z)=\int_{\mathbb D}f(w)\tau_{\alpha,\beta}(z,\xi)d\mu_{\alpha,\beta}(\xi)$$
    where $$\tau_{\alpha,\beta}(z,\xi)=\frac{1}{z}+\left(\frac{h(|z|^2)}{h(1)}-1\right)\frac{\mathcal K_{\alpha,\beta}(z\overline{\xi})}{z}.$$
\end{lem}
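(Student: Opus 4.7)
The plan is to recognize that, after interchanging the order of integration, the kernel $\tau_{\alpha,\beta}(z,\xi)$ is just the Cauchy transform of the function $w\mapsto \mathcal K_{\alpha,\beta}(w\overline{\xi})$ evaluated at $z$, and to compute this transform termwise from the power-series expansion of the reproducing kernel. Concretely, unpacking the definitions of $\mathbb P_{\alpha,\beta}$ and $\mathcal C$ and applying Fubini's theorem (first on a dense subspace such as the bounded functions, then extended by continuity) gives
$$T_{\alpha,\beta}f(z)=\int_{\mathbb D}f(\xi)\,\tau_{\alpha,\beta}(z,\xi)\,d\mu_{\alpha,\beta}(\xi),\qquad \tau_{\alpha,\beta}(z,\xi)=-\int_{\mathbb D}\frac{\mathcal K_{\alpha,\beta}(w\overline{\xi})}{w-z}\,d\mu_{\alpha,\beta}(w),$$
so the lemma reduces to evaluating this last integral explicitly.

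Expanding the reproducing kernel as $\mathcal K_{\alpha,\beta}(\eta)=\sum_{n\geq 0}c_n\eta^n$ with $c_n=(\alpha+\beta+2)_n/(\beta+1)_n$, and using that $\mathcal C$ is bounded on $L^2(\mathbb D,\mu_{\alpha,\beta})$ by Lemma \ref{l1}, the task reduces further to computing $\mathcal C(w^n)(z)$ for each $n\geq 0$. I would pass to polar coordinates $w=\rho e^{i\theta}$, split the disc into $\{|w|<|z|\}$ and $\{|w|>|z|\}$, and expand $\tfrac{1}{w-z}$ as the appropriate geometric series in each region. The angular orthogonality $\int_0^{2\pi}e^{im\theta}\,d\theta=2\pi\delta_{m,0}$ then collapses each series to a single surviving term, and the remaining radial integral reduces, via $t=\rho^2$, to the incomplete beta function $h$. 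This yields
$$\mathcal C(1)(z)=\frac{h(|z|^2)}{z\,h(1)}\qquad\text{and}\qquad \mathcal C(w^n)(z)=z^{n-1}\!\left(\frac{h(|z|^2)}{h(1)}-1\right)\quad(n\geq 1).$$

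Substituting these into the series and factoring produces
$$\tau_{\alpha,\beta}(z,\xi)=\frac{h(|z|^2)}{zh(1)}+\frac{1}{z}\!\left(\frac{h(|z|^2)}{h(1)}-1\right)\sum_{n\geq 1}c_n(z\overline{\xi})^n=\frac{h(|z|^2)}{zh(1)}+\frac{1}{z}\!\left(\frac{h(|z|^2)}{h(1)}-1\right)\bigl(\mathcal K_{\alpha,\beta}(z\overline{\xi})-1\bigr),$$
and a one-line algebraic rearrangement yields the closed form claimed in the lemma. The hard part is technical rather than conceptual: justifying the Fubini interchange and the termwise application of $\mathcal C$ to the infinite kernel series, both of which involve the mild singularity of $(w-z)^{-1}$ together with the boundary behaviour of $\mathcal K_{\alpha,\beta}$. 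My preferred remedy is to first run all manipulations on polynomials in $\xi$ and $\overline{\xi}$ (where Fubini is immediate and the kernel expansion is a finite sum), and then extend to arbitrary $f\in L^2(\mathbb D,\mu_{\alpha,\beta})$ by continuity: the explicit formula shows $\tau_{\alpha,\beta}(z,\cdot)\in L^2(\mathbb D,\mu_{\alpha,\beta})$ for each $z\in\mathbb D^*$, so the right-hand side is a continuous linear functional of $f$, and the identity propagates from the dense subspace.
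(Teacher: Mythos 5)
Your proposal is correct, and the reduction via Fubini to the kernel integral $\tau_{\alpha,\beta}(z,\xi)=-\int_{\mathbb D}\mathcal K_{\alpha,\beta}(w\overline{\xi})(w-z)^{-1}d\mu_{\alpha,\beta}(w)$ is exactly the paper's first step. From there, however, you take a genuinely different route. The paper evaluates the kernel integral in one stroke by Stokes' formula: it introduces the auxiliary function $g_{z,\xi}(w)=-\frac{\mathcal K_{\alpha,\beta}(w\overline{\xi})}{w(w-z)}\frac{h(|w|^2)}{h(1)}$, whose $\overline{w}$-derivative reproduces the integrand precisely because $\tfrac{\partial}{\partial\overline w}h(|w|^2)=w|w|^{2\beta}(1-|w|^2)^\alpha$ and $h(1)=\mathscr B(\alpha+1,\beta+1)$, applies Stokes on $\mathbb D(0,\eta)\smallsetminus(\mathbb D(0,\varepsilon)\cup\mathbb D(z,\tau))$, computes the three contour integrals by residues, and lets $\eta\to1^-$, $\varepsilon\to0^+$, $\tau\to0^+$. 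This handles the full kernel at once, with no series manipulation and no Fubini issues beyond the initial one. Your approach instead expands $\mathcal K_{\alpha,\beta}$ and computes $\mathcal C(w^n)(z)$ termwise; your formulas $\mathcal C(1)(z)=h(|z|^2)/(zh(1))$ and $\mathcal C(w^n)(z)=z^{n-1}(h(|z|^2)/h(1)-1)$ for $n\geq1$ are correct and the final algebra matches the stated kernel. The price is the extra bookkeeping you already flag: the termwise application of $\mathcal C$ to the kernel series is harmless (for fixed $\xi\in\mathbb D$ the series converges uniformly in $w$ and $(w-z)^{-1}$ is $\mu_{\alpha,\beta}$-integrable), but the inner interchange of the geometric series for $(w-z)^{-1}$ with the integral over $\{|w|>|z|\}$ is \emph{not} absolutely convergent as a global double sum-integral (the $k$-th term decays only like $1/k$), so you must perform the angular integration first for each fixed radius $\rho$ (where the series converges uniformly in $\theta$) and only then integrate in $\rho$ --- which is in fact the order your write-up prescribes. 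What your method buys is elementarity and a direct identification of the image of each basis vector $e_n$ under $\mathcal C$, which feeds naturally into the subsequent singular-value computation; what the paper's Stokes argument buys is a cleaner, series-free evaluation whose only limit processes are the three boundary terms.
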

\begin{proof}
  Let $\alpha>-1$ and $f\in L^2(\mathbb D,\mu_{\alpha,\beta})$. Then as mentioned before,
    $$\mathbb P_{\alpha,\beta}f(w)=\int_{\mathbb D}f(\xi)\mathcal K_{\alpha,\beta}(w\overline{\xi})d\mu_{\alpha,\beta}(\xi).$$
Hence,
$$\begin{array}{lcl}
\ds T_{\alpha,\beta}f(z)&=&\ds-\int_{\mathbb D}\frac{\mathbb P_{\alpha,\beta}f(w)}{w-z}d\mu_{\alpha,\beta}(w) =-\int_{\mathbb D}\frac{1}{w-z}\left(\int_{\mathbb D}f(\xi)\mathcal K_{\alpha,\beta}(w\overline{\xi})d\mu_{\alpha,\beta}(\xi)\right)d\mu_{\alpha,\beta}(w)\\
&=&\ds -\int_{\mathbb D} f(\xi)\left(\int_{\mathbb D}  \frac{\mathcal K_{\alpha,\beta}(w\overline{\xi})}{w-z}d\mu_{\alpha,\beta}(w)\right)d\mu_{\alpha,\beta}(\xi) =\int_{\mathbb D} f(\xi)\tau_{\alpha,\beta}(z,\xi)d\mu_{\alpha,\beta}(\xi).
\end{array}$$
To conclude the result, it suffices to determine the kernel
$$\tau_{\alpha,\beta}(z,\xi)=-\int_{\mathbb D}  \frac{\mathcal K_{\alpha,\beta}(w\overline{\xi})}{w-z}d\mu_{\alpha,\beta}(w).$$
The idea is to use Stokes' formula. To this aim, as mentioned above,  we consider the function $h$ defined on $[0,1]$ by:
$$h(r)=\int_0^rt^\beta(1-t)^\alpha dt.$$
Then $h$ is continuous on $[0,1]$ and $\mathcal C^1$ on $]0,1[$ with derivative $h'(r)=r^\beta(1-r)^\alpha$.\\
Let $\xi\in\mathbb D,\ z\in\mathbb D^*$ be some fixed points and $g_{z,\xi}$ be the function defined on $\mathbb D\smallsetminus \{0,z\}$ by $$g_{z,\xi}(w)=-\frac{\mathcal K_{\alpha,\beta}(w\overline{\xi})}{w(w-z)}\frac{h(|w|^2)}{h(1)}.$$ Then $g_{z,\xi}$ is $\mathcal C^1$ on $\mathbb D\smallsetminus \{0,z\}$. For every $\varepsilon,\ \tau$ and $\eta$ such that $0<\varepsilon<|z|-\tau<|z|+\tau<\eta<1$, we set $\Omega_{\varepsilon,\tau,\eta}:=\mathbb D(0,\eta)\smallsetminus\Bigl(\mathbb D(0,\varepsilon)\cup\mathbb D(z,\tau)\Bigr)$. Thanks to Stokes' formula,
\begin{equation}\label{2.1}
  \begin{array}{l}
   -\ds\int_{\Omega_{\varepsilon,\tau,\eta}}  \frac{\mathcal K_{\alpha,\beta}(w\overline{\xi})}{w-z}d\mu_{\alpha,\beta}(w)=\ds -\frac{1}{2i\pi} \int_{\Omega_{\varepsilon,\tau,\eta}}\frac{\partial g_{z,\xi}}{\partial \overline{w}}(w)dw\wedge d\overline{w}\\
   = \ds\frac{1}{2i\pi} \int_{\partial \mathbb D(0,\eta)}g_{z,\xi}(w)dw-\frac{1}{2i\pi} \int_{\partial \mathbb D(0,\varepsilon)}g_{z,\xi}(w)dw-\frac{1}{2i\pi} \int_{\partial \mathbb D(z,\tau)}g_{z,\xi}(w)dw\\
   =\mathcal N_1(\eta)+\mathcal N_2(\varepsilon)+\mathcal N_3(\tau).
\end{array}
\end{equation}
Now, thanks to Residues theorem,
$$
\begin{array}{lcl}
    \ds\mathcal N_1(\eta)&=&\ds \frac{1}{2i\pi} \int_{\partial \mathbb D(0,\eta)}g_{z,\xi}(w)dw =-\frac{h(\eta^2)}{h(1)}\frac{1}{2i\pi} \int_{\partial \mathbb D(0,\eta)} \frac{\mathcal K_{\alpha,\beta}(w\overline{\xi})}{w(w-z)}dw\\
    &=&\ds \frac{h(\eta^2)}{h(1)}\frac{1-\mathcal K_{\alpha,\beta}(z\overline{\xi})}{z}.
\end{array}$$
Thus we find
\begin{equation}\label{2.2}
    \lim_{\eta\to1^-}\mathcal N_1(\eta)=\frac{1-\mathcal K_{\alpha,\beta}(z\overline{\xi})}{z}.
\end{equation}
Again for $\mathcal N_2(\varepsilon)$, we obtain
$$
\begin{array}{lcl}
    \ds\mathcal N_2(\varepsilon)&=&\ds -\frac{1}{2i\pi} \int_{\partial \mathbb D(0,\varepsilon)}g_{z,\xi}(w)dw =\frac{h(\varepsilon^2)}{h(1)}\frac{1}{2i\pi} \int_{\partial \mathbb D(0,\varepsilon)} \frac{\mathcal K_{\alpha,\beta}(w\overline{\xi})}{w(w-z)}dw\\
    &=&\ds -\frac{h(\varepsilon^2)}{h(1)}\frac{1}{z}.
\end{array}$$
Using the fact that $h(0)=0$, we obtain
\begin{equation}\label{2.3}
    \lim_{\varepsilon\to0^+}\mathcal N_2(\varepsilon)=0.
\end{equation}
However, for  $\mathcal N_3(\tau)$,
$$
\begin{array}{lcl}
    \ds\mathcal N_3(\tau)&=&\ds -\frac{1}{2i\pi} \int_{\partial \mathbb D(z,\tau)}g_{z,\xi}(w)dw =\frac{1}{h(1)}\frac{1}{2i\pi} \int_{\partial \mathbb D(z,\tau)} \frac{\mathcal K_{\alpha,\beta}(w\overline{\xi})}{w(w-z)}h(|w|^2)dw\\
    &=&\ds \frac{1}{h(1)}\frac{1}{2i\pi} \int_{\partial \mathbb D(z,\tau)} \frac{\mathcal K_{\alpha,\beta}(w\overline{\xi})}{w(w-z)}(h(|w|^2)-h(|z|^2)dw +\frac{h(|z|^2)}{h(1)}\frac{\mathcal K_{\alpha,\beta}(z\overline{\xi})}{z}.
\end{array}$$
Using the continuity of $w\longmapsto h(|w|^2)$ at $z$, one can conclude that
\begin{equation}\label{2.4}
    \lim_{\tau\to0^+}\mathcal N_3(\tau)=\frac{h(|z|^2)}{h(1)}\frac{\mathcal K_{\alpha,\beta}(z\overline{\xi})}{z}.
\end{equation}
Combining \eqref{2.2}, \eqref{2.3} and \eqref{2.4} with \eqref{2.1}, we obtain
$$\tau_{\alpha,\beta}(z,\xi)=-\int_{\mathbb D}  \frac{\mathcal K_{\alpha,\beta}(w\overline{\xi})}{w-z}d\mu_{\alpha,\beta}(w)=\frac{1}{z}+\left(\frac{h(|z|^2)}{h(1)}-1\right)\frac{\mathcal K_{\alpha,\beta}(z\overline{\xi})}{z}
$$
and the desired result is proved.
\end{proof}
Now we can decompose $T_{\alpha,\beta}$ as a sum using two orthonormal sequences in $L^2(\mathbb D,\mu_{\alpha,\beta})$. Indeed, if we set $$e_n(w)=\sqrt{\frac{(\alpha+\beta+2)_n}{(\beta+1)_n}}w^n,$$
then the sequence $(e_n)_{n\geq 0}$ is a Hilbert basis of $\mathcal A^2(\mathbb D,\mu_{\alpha,\beta})$. Hence, using Lemma \ref{l2}  we find that the kernel of the operator  $T_{\alpha,\beta}$ is
$$\begin{array}{lcl}
\tau_{\alpha,\beta}(z,\xi)&=&\ds \frac{1}{z}+\left(\frac{h(|z|^2)}{h(1)}-1\right) \frac{\mathcal K_{\alpha,\beta}(z\overline{\xi})}{z}\\
&=&\ds \frac{h(|z|^2)}{zh(1)}+\left(\frac{h(|z|^2)}{h(1)}-1\right)\sum_{n=1}^{+\infty}\frac{(\alpha+\beta+2)_n}{(\beta+1)_n}z^{n-1}\overline{\xi}^n\\
&=&\ds \frac{h(|z|^2)}{zh(1)}+\left(\frac{h(|z|^2)}{h(1)}-1\right)\sum_{n=1}^{+\infty}\sqrt{\frac{(\alpha+\beta+2)_n}{(\beta+1)_n}}z^{n-1}e_n(\overline{\xi}).
\end{array}
$$

If we set $d_n(\alpha,\beta)$ such that

$$d_n^2(\alpha,\beta):=
\left\{\begin{array}{lcl}
\ds\frac{1}{h^3(1)}\int_0^1(h(r))^2r^{\beta-1}(1-r)^\alpha dr&if& n=0\\
\ds\frac{1}{h^3(1)}\int_0^1(h(1)-h(r))^2r^{\beta+n-1}(1-r)^\alpha dr&if &n\geq 1.
\end{array}\right.$$
%These quantities are finite due to the choice  $\beta>-\frac{1}{2}$ and $\alpha>-1$.\\
and

$$\psi_n(z)=
\left\{\begin{array}{lcl}
\ds \frac{1}{d_0(\alpha,\beta)}\frac{h(|z|^2)}{zh(1)}&if &n=0\\
\ds \frac{1}{d_n(\alpha,\beta)}\left(\frac{h(|z|^2)}{h(1)}-1\right)z^{n-1}&if & n\geq 1
\end{array}\right.$$
then it is easy to see that $(\psi_n)_{n\geq 0}$ is an orthonormal sequence in $L^2(\mathbb D,\mu_{\alpha,\beta})$.\\
With these notations, we obtain
$$\tau_{\alpha,\beta}(z,\xi)=\ds \sum_{n=0}^{+\infty}d_n(\alpha,\beta)\sqrt{\frac{(\alpha+\beta+2)_n}{(\beta+1)_n}}\psi_n(z)e_n(\overline{\xi}).
$$
It follows that for every $f\in L^2(\mathbb D,\mu_{\alpha,\beta})$,
$$\begin{array}{lcl}
T_{\alpha,\beta}f(z)&=&\ds \sum_{n=0}^{+\infty}d_n(\alpha,\beta)\sqrt{\frac{(\alpha+\beta+2)_n}{(\beta+1)_n}}\psi_n(z)\int_{\mathbb D}f(\xi)e_n(\overline{\xi})d\mu_{\alpha,\beta}(\xi)\\
&=&\ds \sum_{n=0}^{+\infty}d_n(\alpha,\beta) \sqrt{\frac{(\alpha+\beta+2)_n}{(\beta+1)_n}}\psi_n(z)\langle f,e_n\rangle_{\alpha,\beta}.
\end{array}
$$
We deduce so that the singular values $s_n(T_{\alpha,\beta})$ of $T_{\alpha,\beta}$ are given by $$s_n(T_{\alpha,\beta})=d_n(\alpha,\beta)\sqrt{\frac{(\alpha+\beta+2)_n}{(\beta+1)_n}}.$$
\subsection{Singular values of $R_{\alpha,\beta}$}
With the same techniques used for $T_{\alpha,\beta}$, it is easy to see that the kernel of $R_{\alpha,\beta}$ is given by
$$\varrho_{\alpha,\beta}(z,\xi)=\int_{\mathbb D}\tau_{\alpha,\beta}(w,\xi)\mathcal K_{\alpha,\beta}(z\overline{w})d\mu_{\alpha,\beta}(w).
$$
Using Lemma \ref{l2}, we deduce that
$$\begin{array}{lcl}
    \varrho_{\alpha,\beta}(z,\xi)&=&\ds\int_{\mathbb D}\left(\frac{1}{w}+\left(\frac{h(|w|^2)}{h(1)}-1\right)\frac{\mathcal K_{\alpha,\beta}(w\overline{\xi})}{w}\right)\mathcal K_{\alpha,\beta}(z\overline{w})d\mu_{\alpha,\beta}(w).\\
    &=&\ds\frac{1}{h^2(1)}\sum_{n,m=0}^{+\infty}\frac{(\alpha+\beta+2)_n(\alpha+\beta+2)_m}{(\beta+1)_n(\beta+1)_m}z^n\overline{\xi}^m\int_{\mathbb D}h(|w|^2)\overline{w}^nw^{m-1}|w|^{2\beta}(1-|w|^2)^\alpha dA(w)\\
    &&\ds-\frac{1}{h(1)}\sum_{n=0,m=1}^{+\infty}\frac{(\alpha+\beta+2)_n(\alpha+\beta+2)_m}{(\beta+1)_n(\beta+1)_m}z^n\overline{\xi}^m\int_{\mathbb D}\overline{w}^nw^{m-1}|w|^{2\beta}(1-|w|^2)^\alpha dA(w)\\
    &=&\ds\frac{1}{h^2(1)}\sum_{n=0}^{+\infty}\frac{(\alpha+\beta+2)_n(\alpha+\beta+2)_{n+1}}{(\beta+1)_n(\beta+1)_{n+1}}z^n\overline{\xi}^{n+1} \int_{\mathbb D}h(|w|^2)|w|^{2\beta+2n}(1-|w|^2)^\alpha dA(w)\\
    &&\ds-\frac{1}{h(1)}\sum_{n=0}^{+\infty}\frac{(\alpha+\beta+2)_n(\alpha+\beta+2)_{n+1}}{(\beta+1)_n(\beta+1)_{n+1}}z^n\overline{\xi}^{n+1} \int_{\mathbb D}|w|^{2\beta+2n}(1-|w|^2)^\alpha dA(w)\\
    &=&\ds \frac{1}{h^2(1)}\sum_{n=0}^{+\infty}\frac{(\alpha+\beta+2)_n(\alpha+\beta+2)_{n+1}}{(\beta+1)_n(\beta+1)_{n+1}}z^n\overline{\xi}^{n+1} \int_0^1(h(r)-h(1))r^{\beta+n}(1-r)^\alpha dr\\
    &=&\ds \frac{1}{h^2(1)}\sum_{n=0}^{+\infty}\sqrt{\frac{(\alpha+\beta+2)_n(\alpha+\beta+2)_{n+1}}{(\beta+1)_n(\beta+1)_{n+1}}} e_n(z)e_{n+1}(\overline{\xi})\int_0^1(h(r)-h(1))r^{\beta+n}(1-r)^\alpha dr.
    \end{array}
    $$
    It follows that the singular values of $R_{\alpha,\beta}$ are
    \begin{align*}
        s_n(R_{\alpha,\beta})&=\frac{1}{h^2(1)}\sqrt{\frac{(\alpha+\beta+2)_n(\alpha+\beta+2)_{n+1}}{(\beta+1)_n(\beta+1)_{n+1}}} \int_0^1(h(r)-h(1))r^{\beta+n}(1-r)^\alpha dr\\
        &=\frac{1}{h(1)}\sqrt{\frac{\alpha+\beta+2+n}{\beta+1+n}} \frac{(\alpha+\beta+2)_n}{(\beta+1)_n}(\mathscr B(\alpha+1,\beta+n+1)-\mathcal I_{n+1})\\
        &= \frac{1}{\Gamma(\alpha+1)}\sqrt{\frac{\alpha+\beta+2+n}{\beta+1+n}}\frac{\Gamma(\alpha+\beta+2+n)}{\Gamma(\beta+1+n)}(\mathscr B(\alpha+1,\beta+n+1)-\mathcal I_{n+1}).
    \end{align*}
    This is due to the equality     $$R_{\alpha,\beta}f(z)=-\sum_{n=0}^{+\infty}s_n(R_{\alpha,\beta})\langle f,e_{n+1}\rangle_{\alpha,\beta}\ e_n(z).$$
\section{Asymptotic results}
To finish the proof of the main result we shall find the asymptotic behaviors of both $s_n(T_{\alpha,\beta})$ and $s_n(R_{\alpha,\beta})$  when $n\longrightarrow+\infty$. We will see that both of them use the Mellin transform of a power of the function $h(1)-h(r)$. For this reason, we  consider the function $u_p$ defined on $[0,1]$ by $u_p(r)=(h(1)-h(r))^p$ with $p>0$ and its Mellin transform $\mathcal M_{u_p}$ defined on $]0,+\infty[$ by
$$\mathcal M_{u_p}(\eta)=\int_0^1u_p(r)r^{\eta-1}dr.$$
Indeed, using an integration by parts, we obtain
\begin{align*}
    s_{n+1}^2(T_{\alpha,\beta})&=\frac{(\alpha+\beta+2)_{n+1}}{h^3(1)(\beta+1)_{n+1}}\int_0^1(h(1)-h(r))^2r^{\beta+n}(1-r)^\alpha dr\\
         &=\frac{n(\alpha+\beta+2)_{n+1}}{3h^3(1)(\beta+1)_{n+1}}\int_0^1(h(1)-h(r))^3r^{n-1} dr\\
         &=\frac{n(\alpha+\beta+2)_{n+1}}{3h^3(1)(\beta+1)_{n+1}}\mathcal M_{u_3}(n)
\end{align*}
and
\begin{align*}
    s_n(R_{\alpha,\beta})&=\frac{1}{h^2(1)}\sqrt{\frac{(\alpha+\beta+2)_n(\alpha+\beta+2)_{n+1}}{(\beta+1)_n(\beta+1)_{n+1}}} \int_0^1(h(r)-h(1))r^{\beta+n}(1-r)^\alpha dr\\
        &=\frac{n}{2h^2(1)}\sqrt{\frac{(\alpha+\beta+2)_n(\alpha+\beta+2)_{n+1}}{(\beta+1)_n(\beta+1)_{n+1}}} \int_0^1(h(r)-h(1))^2r^{n-1}dr\\
        &=\frac{n}{2h^2(1)}\sqrt{\frac{(\alpha+\beta+2)_n(\alpha+\beta+2)_{n+1}}{(\beta+1)_n(\beta+1)_{n+1}}}\mathcal M_{u_2}(n).
\end{align*}
Thus, it suffices to find the asymptotic behavior of $\mathcal M_{u_p}(\eta)$ for every $p>0$.\\
Since $$\frac{h(1)-h(r)}{(1-r)^{\alpha+1}}\underset{r\to1^-}{\sim}\frac{r^\beta}{\alpha+1}$$
then
$$u_p(r)\underset{r\to1^-}{\sim}\left(\frac{r^\beta}{\alpha+1}(1-r)^{\alpha+1}\right)^p.$$
That is for $\varepsilon>0$, there exists $r_0\in]0,1[$ such that for every $r\in[r_0,1]$,
\begin{equation}\label{q4.1}
    (1-\varepsilon)\left(\frac{r^\beta}{\alpha+1}(1-r)^{\alpha+1}\right)^p\leq u_p(r)\leq (1+\varepsilon)\left(\frac{r^\beta}{\alpha+1}(1-r)^{\alpha+1}\right)^p.
\end{equation}

Now, for every $\eta>0$ we set
$$x_p(\eta)=\int_{r_0}^1u_p(r)r^{\eta-1}dr,\quad y_p(\eta)=\int_0^{r_0}u_p(r)r^{\eta-1}dr $$
so that $\mathcal M_{u_p}(\eta)=x_p(\eta)+y_p(\eta)$.
Using Equation \eqref{q4.1}, we obtain
\begin{equation}\label{q4.2}
    \frac{1-\varepsilon}{(\alpha+1)^p}\int_{r_0}^1r^{p\beta+\eta-1}(1-r)^{p(\alpha+1)}dr\leq x_p(\eta)\leq \frac{1+\varepsilon}{(\alpha+1)^p}\int_{r_0}^1r^{p\beta+\eta-1}(1-r)^{p(\alpha+1)}dr.
\end{equation}
Since
$$b_p(\eta):=\int_0^{r_0}r^{p\beta+\eta-1}(1-r)^{p(\alpha+1)}dr\leq \frac{r_0^{p\beta+\eta-1}}{p(\alpha+1)+1}$$
then $b_p(\eta)=O(r_0^\eta)$ for $\eta$ large. While by Stirling formula,
$$\int_0^1r^{p\beta+\eta-1}(1-r)^{p(\alpha+1)}dr=\mathscr B(p(\alpha+1)+1,p\beta+\eta)\underset{\eta\to+\infty}{\sim} \frac{\Gamma(p(\alpha+1)+1)}{\eta^{p(\alpha+1)+1}}.$$
Thus we deduce that
$$\int_{r_0}^1r^{p\beta+\eta-1}(1-r)^{p(\alpha+1)}dr\underset{\eta\to+\infty}{\sim}\frac{\Gamma(p(\alpha+1)+1)}{\eta^{p(\alpha+1)+1}}
$$
and Equation \eqref{q4.2} gives
$$x_p(\eta)\underset{\eta\to+\infty}{\sim} \frac{1}{(\alpha+1)^p}\frac{\Gamma(p(\alpha+1)+1)}{\eta^{p(\alpha+1)+1}}.
$$
Now for $y_p(\eta)$, we have
$$y_p(\eta)=\int_0^{r_0}u_p(r)r^{\eta-1}dr \leq h^p(1)\frac{r_0^\eta}{\eta}=o(r_0^\eta)=o\left(\frac{1}{\eta^{p(\alpha+1)+1}}\right).$$
We conclude that
$$\mathcal M_{u_p}(\eta)=x_p(\eta)+y_p(\eta)\underset{\eta\to+\infty}{\sim} \frac{1}{(\alpha+1)^p}\frac{\Gamma(p(\alpha+1)+1)}{\eta^{p(\alpha+1)+1}}.$$
As an immediate application, we find that
$$s_{n+1}^2(T_{\alpha,\beta})=\frac{n(\alpha+\beta+2)_{n+1}}{3h^3(1)(\beta+1)_{n+1}}\mathcal M_{u_3}(n)\underset{n\to+\infty}{\sim} \frac{1}{\mathscr B^2(\alpha+1,\beta+1)}\frac{\Gamma(3\alpha+3)}{(\alpha+1)^2\Gamma(\alpha+1)}\frac{1}{n^{2\alpha+2}}$$
and
$$ s_n(R_{\alpha,\beta})=\frac{n}{2h^2(1)}\sqrt{\frac{(\alpha+\beta+2)_n(\alpha+\beta+2)_{n+1}}{(\beta+1)_n(\beta+1)_{n+1}}}\mathcal M_{u_2}(n)\underset{n\to+\infty}{\sim} \frac{1}{\mathscr B(\alpha+1,\beta+1)}\frac{\Gamma(2\alpha+2)}{\Gamma(\alpha+2)}\frac{1}{n^{\alpha+1}}.
$$
This finishes the proof of the main result.\\
To illustrate the previous estimates, we give here some numerical results. We start by collecting some numerical values of $s_n(T_{\alpha,\beta})$ and $s_n(R_{\alpha,\beta})$ with their approximate values $$\tilde{s}_n(T_{\alpha,\beta})=\frac{1}{(\alpha+1)\mathscr B(\alpha+1,\beta+1)}\sqrt{\frac{\Gamma(3\alpha+3)}{\Gamma(\alpha+1)}}\frac{1}{n^{\alpha+1}}$$ and $$\tilde{s}_n(R_{\alpha,\beta})=\frac{1}{\mathscr B(\alpha+1,\beta+1)}\frac{\Gamma(2\alpha+2)}{\Gamma(\alpha+2)}\frac{1}{n^{\alpha+1}}.$$ Then we point out these values to obtain the two figures \ref{fig1} and \ref{fig2}. 

\begin{tabular}{|c|c|c|}
    \hline
   $n$  & $s_n(T_{0.5,-0.5})$& $\tilde{s}_n(T_{0.5,-0.5})$\\
   \hline
    2 &0.4206514145 & 0.5436176218\\
    3&0.2413494585& 0.2959079530\\
    4&0.1625945876& 0.1921978534\\
    5&0.1193856181& 0.1375255889\\
    6&0.09256482959& 0.1046192601\\
    7&0.07453354242& 0.08302166898\\
    8&0.06170976400& 0.06795220272\\
    9&0.05219862414& 0.05694751211\\
    10&0.04491072702& 0.04862263825\\
    11&0.03917885115& 0.04214533324\\
    12&0.03457344319& 0.03698849413\\
    13&0.03080655199& 0.03280374984\\
    14&0.02767857985& 0.02935259257\\
    15&0.02504721602& 0.02646681192\\
    16&0.02280851986& 0.02402473167\\
    17&0.02088497665& 0.02193638894\\
    18&0.01921771639& 0.02013398599\\
    19&0.01776130543& 0.01856556277\\
    20&0.01648017596& 0.01719069861\\
    21&0.01534612597& 0.01597752765\\
    \hline
\end{tabular}
\hfill
\begin{tabular}{|c|c|c|}
   \hline
   $n$  & $s_n(R_{0.5,-0.5}$& $\tilde{s}_n(R_{0.5,-0.5})$\\
   \hline
    1 &0.3250006690 &0.9577979850 \\
    2&0.1767674033& 0.3386327251\\
    3&0.1153593192&0.1843283081 \\
    4&0.08292304252&0.1197247481 \\
    5&0.06332202138&0.08566805611 \\
    6&0.05040561060&0.06516989832 \\
    7&0.04136102976&0.05171623009 \\
    8&0.03473553702&0.04232909063 \\
    9&0.02970998892&0.03547399944 \\
    10&0.02579068699&0.03028823171 \\
    11&0.02266406397&0.02625335985 \\
    12&0.02012233453&0.02304103852 \\
    13&0.01802289947&0.02043425886 \\
    14&0.01626496862&0.01828444850 \\
    15&0.01477548936&0.01648682509 \\
    \hline
\end{tabular}
\vskip1cm
\begin{figure}[h!]
\begin{tikzpicture}[x=0.5cm,y=10cm]
 \draw[->, thin, draw=gray] (-1,0)--(22,0) node [right] {$n$}; % l'axe des abscisses
 \draw[->, thin, draw=gray] (0,-0.1)--(0,0.7) node [above] {$s_n(T_{0.5,-0.5})$}; % l'axe des ordonnées
 %\draw[thick] (-3,-2)--(3,4); % l'axe des abscisses
\foreach \Point in {(2,0.5436176218),(3, 0.2959079530),(4,0.1921978534),(5,0.1375255889),(6,0.1046192601),(7,0.08302166898),(8,0.06795220272),(9,0.05694751211),(10,0.04862263825),(11,0.04214533324),(12,0.03698849413),(13,0.03280374984),(14,0.02935259257),(15,0.02646681192),(16,0.02402473167),(17,0.02193638894),(18,0.02013398599),(19,0.01856556277),(20,0.01719069861),(21,0.01597752765)}{\node[blue, scale=0.7]  at \Point {\textbullet};
}
\foreach \Point in {(2,0.4206514145),(3,0.2413494585),(4,0.1625945876),(5,0.1193856181),(6,0.09256482959), (7,0.07453354242),(8,0.06170976400),(9,0.05219862414),(10,0.04491072702),(11,0.03917885115),(12,0.03457344319),(13,0.03080655199),(14,0.02767857985),(15,0.02504721602),(16,0.02280851986),(17,0.02088497665),(18,0.01921771639),(19,0.01776130543),(20,0.01648017596),(21,0.01534612597)}{\node[red, scale=0.7]  at \Point {$\times$};
}

%\foreach \Point in {(2,-1.5), (1,-1), (2,-3), (1,-2.5), (1,-3)}{    \node at \Point {$\circ$};}

% to ensure that the points are being properly centered:
\draw [dotted, gray] (-1,-0.1) grid (22,0.7);
\node at (0,0.1) {$-$};\node at (-1,0.1) {0.1};
\node at (0,0.2) {$-$};\node at (-1,0.2) {0.2};
\node at (0,0.3) {$-$};\node at (-1,0.3) {0.3};
\node at (0,0.4) {$-$};\node at (-1,0.4) {0.4};
\node at (0,0.5) {$-$};\node at (-1,0.5) {0.5};
\node at (0,0.6) {$-$};\node at (-1,0.6) {0.6};
\node at (2,0) {$|$};\node at (4,0) {$|$};\node at (6,0) {$|$};\node at (8,0) {$|$};\node at (10,0) {$|$};\node at (12,0) {$|$};\node at (14,0) {$|$};\node at (16,0) {$|$};\node at (18,0) {$|$};\node at (20,0) {$|$};
\node at (2,-0.05) {2};\node at (4,-0.05) {4};\node at (6,-0.05) {6};\node at (8,-0.05) {8};\node at (10,-0.05) {10};\node at (12,-0.05) {12};\node at (14,-0.05) {14};\node at (16,-0.05) {16};\node at (18,-0.05) {18};\node at (20,-0.05) {20};
\node[red] at (10,0.6){$\times$};
\node[red] at (13,0.6){$s_n(T_{0.5,-0.5})$};
\node[blue] at (10,0.5){\textbullet};
\node[blue] at (13,0.5){$\widetilde{s}_n(T_{0.5,-0.5})$};
%\node [blue] at (3,-2.5) {$\circ$};
%\draw[blue] (0,0) circle (1);
%\node[below] at (-0.2,0) {$0$};
%    \node[below] at (1.1,0) {$1$};
\end{tikzpicture}
\caption{Singular values of $T_{0.5,-0.5}$.}\label{fig1}
\end{figure}

\begin{figure}[h!]
\begin{tikzpicture}[x=1cm,y=10cm]
 \draw[->, thin, draw=gray] (-1,0)--(16,0) node [right] {$n$}; % l'axe des abscisses
 \draw[->, thin, draw=gray] (0,-0.1)--(0,1) node [above] {$s_n(R_{0.5,-0.5})$}; % l'axe des ordonnées
 %\draw[thick] (-3,-2)--(3,4); % l'axe des abscisses
\foreach \Point in {(1,0.9577979850),(2,0.3386327251),(3,0.1843283081),(4,0.1197247481),(5,0.08566805611),(6,0.06516989832),(7,0.05171623009),(8,0.04232909063),(9,0.03547399944),(10,0.03028823171),(11,0.02625335985),(12,0.02304103852),(13,0.02043425886),(14,0.01828444850),(15,0.01648682509)}{\node[blue, scale=0.7]  at \Point {\textbullet};
}
\foreach \Point in {(1,0.3250006690),(2,0.1767674033),(3,0.1153593192),(4,0.082923042520),(5,0.063322021380),(6,0.05040561060),(7,0.04136102976),(8,0.03473553702),(9,0.02970998892),(10,0.02579068699),(11,0.02266406397),(12,0.02012233453),(13,0.01802289947),(14,0.01626496862),(15,0.01477548936)}{\node[red, scale=0.7]  at \Point {$\times$};
}

%\foreach \Point in {(2,-1.5), (1,-1), (2,-3), (1,-2.5), (1,-3)}{    \node at \Point {$\circ$};}

% to ensure that the points are being properly centered:
\draw [dotted, gray] (-1,-0.1) grid (16.5,1.1);
\node at (0,0.1) {$-$};\node at (-0.5,0.1) {0.1};
\node at (0,0.2) {$-$};\node at (-0.5,0.2) {0.2};
\node at (0,0.3) {$-$};\node at (-0.5,0.3) {0.3};
\node at (0,0.4) {$-$};\node at (-0.5,0.4) {0.4};
\node at (0,0.5) {$-$};\node at (-0.5,0.5) {0.5};
\node at (0,0.6) {$-$};\node at (-0.5,0.6) {0.6};
\node at (0,0.7) {$-$};\node at (-0.5,0.7) {0.7};
\node at (0,0.8) {$-$};\node at (-0.5,0.8) {0.8};
\node at (0,0.9) {$-$};\node at (-0.5,0.9) {0.9};
\node at (1,0) {$|$};\node at (2,0) {$|$};\node at (3,0) {$|$};\node at (4,0) {$|$};\node at (5,0) {$|$};\node at (6,0) {$|$};\node at (7,0) {$|$};\node at (8,0) {$|$};\node at (9,0) {$|$};\node at (10,0) {$|$};
\node at (11,0) {$|$};\node at (12,0) {$|$};\node at (13,0) {$|$};\node at (14,0) {$|$};\node at (15,0) {$|$};
\node at (1,-0.05) {1};\node at (2,-0.05) {2};\node at (3,-0.05) {3};\node at (4,-0.05) {4};\node at (5,-0.05) {5};\node at (6,-0.05) {6};\node at (7,-0.05) {7};\node at (8,-0.05) {8};\node at (9,-0.05) {9};\node at (10,-0.05) {10};\node at (11,-0.05) {11};\node at (12,-0.05) {12};\node at (13,-0.05) {13};\node at (14,-0.05) {14};\node at (15,-0.05) {15};
%\node [blue] at (3,-2.5) {$\circ$};
%\draw[blue] (0,0) circle (1);
\node at (-0.3,-0.05) {$0$};
\node[red] at (11,0.8){$\times$};
\node[red] at (12.5,0.8){$s_n(R_{0.5,-0.5})$};
\node[blue] at (11,0.7){\textbullet};
\node[blue] at (12.5,0.7){$\widetilde{s}_n(R_{0.5,-0.5})$};

%    \node[below] at (1.1,0) {$1$};
\end{tikzpicture}
\caption{Singular values of $R_{0.5,-0.5}$.}\label{fig2}
\end{figure}
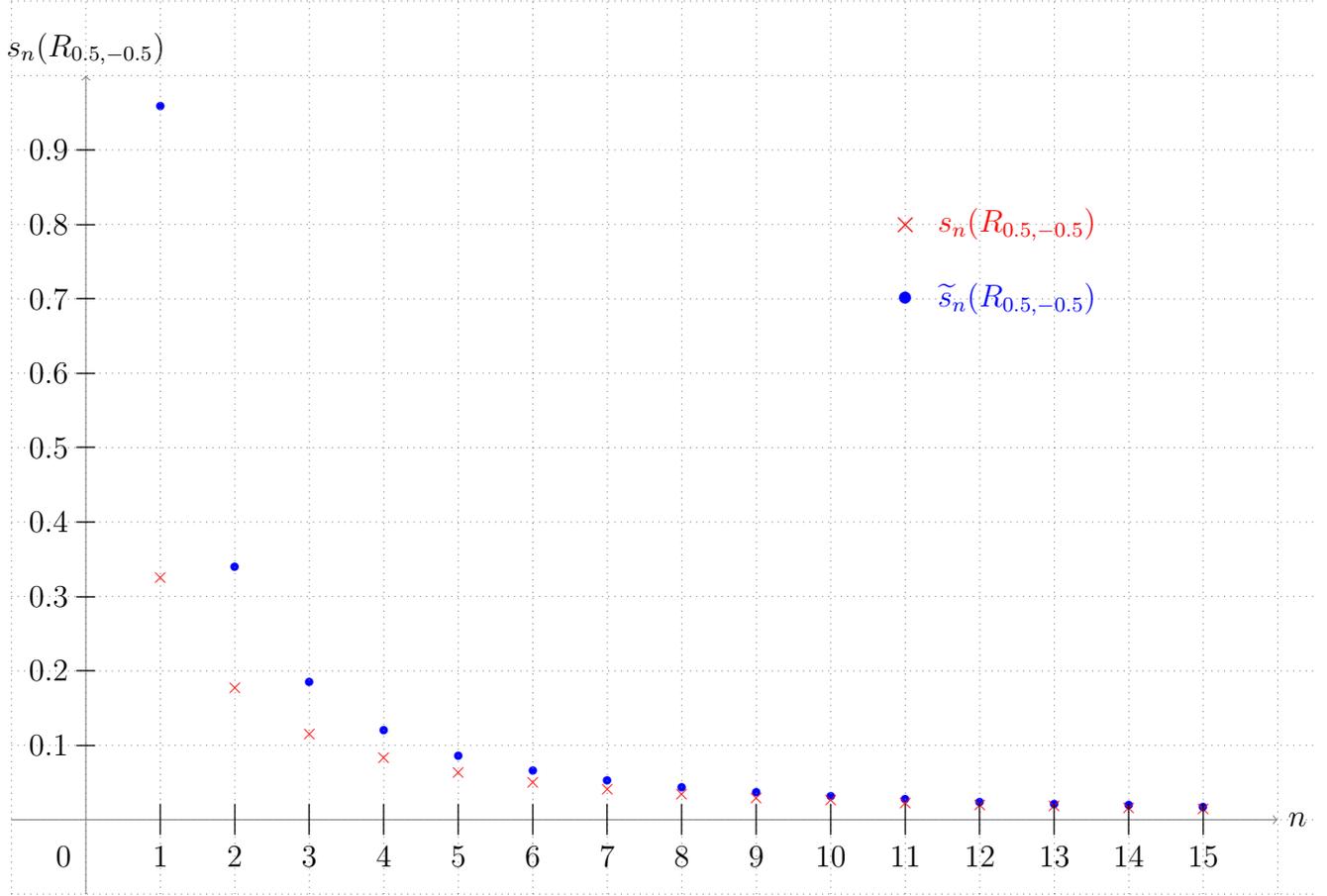
\vskip0.5cm
We claim that Professor J. Benameur proposed the decomposition of the previous integral into two parts using $r_0$. Our earlier proof has two disadvantages, firstly it is based on an assumption (true for $\alpha$ integer) and secondly, it doesn't give exact behavior near infinity. For its originality, we give this proof:
\begin{rem}
    There is a partial proof of the asymptotic result based on the properties of the hypergeometric functions.
\end{rem}
\begin{proof}
For every $n\in\mathbb N$,
     \begin{align*}
         s_{n+1}^2(T_{\alpha,\beta})&\approx \frac{(\alpha+\beta+3)_n}{(\beta+2)_n}\int_0^1\left(1-\frac{h(r)}{h(1)}\right)^2r^{\beta+n}(1-r)^\alpha dr\\
         s_n(R_{\alpha,\beta})&\approx \frac{(\alpha+\beta+2)_n}{(\beta+1)_n}\int_0^1\left(1-\frac{h(r)}{h(1)}\right)r^{\beta+n}(1-r)^\alpha dr
     \end{align*}
     where $s_n\approx \sigma_n$ means that there exist two constants $0<c_1<c_2$ such that $c_1\sigma_n\leq s_n\leq c_2\sigma_n$ for $n$ large enough. i.e. $\sigma_n\lesssim s_n$ and $s_n\lesssim \sigma_n$.\\
     We assume that there exist $\delta_T$ and $\delta_R$ such that $$s_n(T_{\alpha,\beta})\underset{n\to+\infty}{\sim}\frac{a_{\alpha,\beta}}{n^{\delta_T}}\quad \text{and}\quad  s_n(R_{\alpha,\beta})\underset{n\to+\infty}{\sim}\frac{b_{\alpha,\beta}}{n^{\delta_R}}$$
     for some positive constants $a_{\alpha,\beta}$ and $b_{\alpha,\beta}$. (This assumption is true for $\alpha$ integer but we don't know if it is for arbitrary value of $\alpha$ unless we use the previous proof). To conclude the result, it suffices to show that $\delta_T=\delta_R=\alpha+1$. To this aim, we study the following series:
     $$\sum_{n\geq0}n^{2\alpha+1-\varepsilon}s^2_{n+1}(T_{\alpha,\beta})\approx \sum_{n\geq0} \frac{(3\alpha+\beta+3-\varepsilon)_n}{(\alpha+\beta+3)_n}s^2_{n+1}(T_{\alpha,\beta})$$
     for every $0\leq \varepsilon<\alpha.$ It is not hard to see that
     $$\begin{array}{l}
         \ds\sum_{n=0}^{+\infty} \frac{(3\alpha+\beta+4-\varepsilon)_n}{(\alpha+\beta+3)_n}s^2_{n+1}(T_{\alpha,\beta})\\
         \ds\approx \int_0^1\left(1-\frac{h(r)}{h(1)}\right)^2r^{\beta}(1-r)^\alpha \ _2F_1\left(\left.
\begin{array}{c}
1,\ 3\alpha+\beta+4-\varepsilon\\
 \beta+2
\end{array}\right|r\right)dr\\
\ds\approx \int_0^1\left(1-\frac{h(r)}{h(1)}\right)^2\frac{r^{\beta}}{(1-r)^{2\alpha+3-\varepsilon}}\ _2F_1\left(\left.
\begin{array}{c}
\beta,\ -(3\alpha+2-\varepsilon)\\
 \beta+2
\end{array}\right|r\right)dr.
\end{array}$$
The last equality is a consequence of a formula on hypergeometric functions that can be found on page  47 in \cite{Ma-Ob-So}. Using a formula \cite[p.40]{Ma-Ob-So}, we deduce that the function $\ _2F_1\left(\left.
\begin{array}{c}
\beta,\ -(3\alpha+2-\varepsilon)\\
 \beta+2
\end{array}\right|r\right)$   is bounded on $\overline{\mathbb D}$ and its limit at 1 is
$$\ _2F_1\left(\left.
\begin{array}{c}
\beta,\ -(3\alpha+2-\varepsilon)\\
 \beta+2
\end{array}\right|1\right)=\frac{\Gamma(\beta+2)\Gamma(3\alpha+4-\varepsilon)}{\Gamma(3\alpha+\beta+4-\varepsilon)}.$$
Moreover,
$$\lim_{r\to1^-}\left(1-\frac{h(r)}{h(1)}\right)^2\frac{r^{\beta}}{(1-r)^{2\alpha+2}}=\frac{1}{((\alpha+1)h(1))^2}.$$
It follows that the series $\sum_{n\geq0}n^{2\alpha+1-\varepsilon}s^2_{n+1}(T_{\alpha,\beta})$ converges for every $0<\varepsilon<\alpha$ and diverges for $\varepsilon=0$. This gives $\delta_T=\alpha+1.$ \\
With the same argument, we can see that
$$\begin{array}{l}
         \ds\sum_{n=0}^{+\infty} \frac{(2\alpha+\beta+2-\varepsilon)_n}{(\alpha+\beta+2)_n}s_n(R_{\alpha,\beta})\\
         \ds\approx \int_0^1\left(1-\frac{h(r)}{h(1)}\right)\frac{r^{\beta}}{(1-r)^{\alpha+2-\varepsilon}}\ _2F_1\left(\left.
\begin{array}{c}
\beta,\ -(2\alpha+1-\varepsilon)\\
 \beta+1
\end{array}\right|r\right)dr
\end{array}$$
converges for every $0<\varepsilon<\alpha$ and diverges for $\varepsilon=0$. Thus we conclude again that $\delta_R=\alpha+1$.
\end{proof}
\begin{rem}
    For every $\alpha>-1$ and $-\frac{1}{2}<\beta\leq0,$ the operators $T_{\alpha,\beta}$ and $R_{\alpha,\beta}$ are compact.
\end{rem}
    Indeed, we have
    \begin{align*}        T_{\alpha,\beta}f(z)&=\sum_{n=0}^{+\infty}s_n(T_{\alpha,\beta})\langle f,\psi_{n}\rangle_{\alpha,\beta} e_n(z)\\
        R_{\alpha,\beta}f(z)&=-\sum_{n=0}^{+\infty}s_n(R_{\alpha,\beta})\langle f,e_{n+1}\rangle_{\alpha,\beta} e_n(z)
    \end{align*}
    and their singular values converge to zero. Hence we deduce that the operators are both compact as limits of sequences of operators with finite ranks.
 \section{Concluding remarks}
 We collect here some properties of $T_{\alpha,\beta}$ and $R_{\alpha,\beta}$.
 \begin{enumerate}
     \item Since $T_{\alpha,\beta}^*T_{\alpha,\beta}$ and $R_{\alpha,\beta}^*R_{\alpha,\beta}$ are self adjoint operators, then $$\|T_{\alpha,\beta}\|^2=\|T_{\alpha,\beta}^*T_{\alpha,\beta}\|=\sup_{n\geq0} s_n(T_{\alpha,\beta})^2$$ and $$\|R_{\alpha,\beta}\|=\sup_{n\geq0}s_n(R_{\alpha,\beta}).$$
     \item Since for every $p>\frac{1}{\alpha+1}$, the series $\sum_{n\geq} n^{-p(\alpha+1)}$ converges,  then both operators  $T_{\alpha,\beta}$ and $R_{\alpha,\beta}$ are in the $p^{th}-$Schatten class. In particular, they are Hilbert-Schmidt operators whenever $\alpha>-\frac{1}{2}$.
     \item Both operators $T_{\alpha,\beta}$ and $R_{\alpha,\beta}$ are neither hyponormal nor paranormal. Indeed, a simple computation shows that  $$T_{\alpha,\beta}^2f(z)=\sum_{n=0}^{+\infty}s_n(T_{\alpha,\beta})s_{n+1}(T_{\alpha,\beta})\langle f,e_{n+1}\rangle_{\alpha,\beta}\langle \psi_{n+1},e_{n+1}\rangle_{\alpha,\beta}\psi_{n+1}(z).$$
 So we obtain $T_{\alpha,\beta}e_0\neq0$ and $T_{\alpha,\beta}^2e_0=0$. Hence we don't have $\|T_{\alpha,\beta}f\|^2\leq \|T_{\alpha,\beta}^2f\|$ for every $f\in L^2(\mathbb D,\mu_{\alpha,\beta})$. Thus $T_{\alpha,\beta}$ is not paranormal. The non-hyponormality of $T_{\alpha,\beta}$ can be deduced from the fact that $[T_{\alpha,\beta}^*,T_{\alpha,\beta}]:=T_{\alpha,\beta}^*T_{\alpha,\beta}-T_{\alpha,\beta}T_{\alpha,\beta}^*$ is not positive because $$[T_{\alpha,\beta}^*,T_{\alpha,\beta}]f(z)=\sum_{n=0}^{+\infty}|s_n(T_{\alpha,\beta})|^2\left(\langle f,e_n\rangle_{\alpha,\beta} e_n(z)-\langle f,\psi_n\rangle_{\alpha,\beta} \psi_n(z)\right).$$
The same conclusions can be deduced for $R_{\alpha,\beta}$ from the fact that
$$R_{\alpha,\beta}^2f(z)=\sum_{n=0}^{+\infty}s_n(R_{\alpha,\beta})s_{n+1}(R_{\alpha,\beta})\langle f,e_{n+2}\rangle_{\alpha,\beta} e_n(z)$$
 and  $$[R_{\alpha,\beta}^*,R_{\alpha,\beta}]f(z)=\sum_{n=0}^{+\infty}|s_n(R_{\alpha,\beta})|^2\left(\langle f,e_{n+1}\rangle_{\alpha,\beta} e_{n+1}(z)-\langle f,e_n\rangle_{\alpha,\beta} e_n(z)\right).$$
\item Open problems:
\begin{enumerate}
    \item The conditions $-\frac{1}{2}<\beta\leq0$ and $\alpha>-\frac{2\beta+1}{3\beta+2}$ are sufficient for the compactness of the Cauchy transform on $L^2(\mathbb D,\mu_{\alpha,\beta})$. Can one improve these conditions (namely prove the result for any $\alpha,\beta>-1$)?
    \item Find the singular values of $\mathcal C\mathbb P$ and $\mathbb P\mathcal C\mathbb P$ when $\mathbb P$ is the orthogonal projection on the subspace of poly-analytic or harmonic functions in $\mathbb D$.
\end{enumerate}

 \end{enumerate}
\section*{Conflict of interest statement}
The authors declare that there is no conflict of interest.
 \section*{Acknowledgements}
The authors would like to thank Professor Jamel Benameur for his help in proving the asymptotic result.

\end{document}